\newenvironment{proof}[1][] {\noindent {\bf Proof#1:} }{\hspace*{\fill}$\square$\medskip\par}
\newtheorem{thm}{Theorem}
\newtheorem{lem}[thm]{Lemma}
\newtheorem{prop}[thm]{Proposition}
\def\R{{\mathbb R}}
\def\E{{\mathbb E\,}}
\def\P{{\mathbb P}}
\def\AA{\mathcal A}
\def\compl{\textrm {c}}
\def\NN{\mathcal N}
\def\SS{\mathcal S}
\def\be#1\ee{\begin{equation}#1\end{equation}}
\def\ed#1{ {\mathbf 1}_{ \{#1  \}}}             
\def\Var{\textrm{Var}\,}
\title{Gaussian Assignment Process}
\author{M.A.Lifshits, A.A.Tadevosian }
\date{}
\begin{document}

\maketitle
\begin{abstract}
We define Gaussian assignment process, determine the asymptotic behavior of its maximum's expectation and suggest an explicit strategy that attains the corresponding asymptotics.
\end{abstract}

\section{Introduction} \label{Introduction}

We consider the following \textit{random assignment problem}.  Let ($X_{ij}$) be an $n\times n$ random matrix with i.i.d. random entries having a common distribution $\mathcal{P}$. Let $\SS_n$ denote the group of permutations $\pi : \{1, 2, \dots, n\} \to \{1, 2, \dots, n\}$. For every $\pi\in \SS_n$ let
\[
   S(\pi)=\sum\limits_{i=1}^{n} X_{i\pi(i)}.
\]

We are interested in the study of $\min\limits_{\pi\in \SS_n} S(\pi)$ or $\max\limits_{\pi\in \SS_n} S(\pi)$ and in finding the optimal permutation 
$ \arg\min\limits_{\pi\in \SS_n} S(\pi)$,
resp. 
$\arg\max\limits_{\pi\in \SS_n} S(\pi)$.


We refer to \cite{ CoppersmithSorkin,SteeleProbTheoryAndCombOpt} for many applications of assignment problem in various fields of mathematics.
\medskip

The setting with $(X_{ij})$ uniformly distributed on $[0, 1]$ was studied by Steele \cite{SteeleProbTheoryAndCombOpt} and M\'ezard and Parisi \cite{ RandomLinkMatching}, where the authors 
proved that
\[
    \E{\min\limits_{\pi\in \SS_n} S(\pi)} = \zeta(2) - \frac{\zeta(2) + 2\zeta(3)}{n} + O\left(\frac{1}{n^2}\right), \text{ as } n\to \infty,
\]
$\zeta(\cdot)$ being  Riemann's zeta function.
M\'ezard and Parisi \cite{SpinGlassTheory} also conjectured that in the exponential case  
($\mathcal{P} = \mathrm{Exp}(1)$) it is true that
\be \label{mezpar}
    \E{\min\limits_{\pi\in \SS_n} S(\pi)} \to \zeta(2) = \frac{\pi^2}{6}.
\ee
Using replica method from statistical physics \cite{DotsenkoReplica:1993}, they provided an heuristical
argumentation in favor of this conjecture.

Later Parisi \cite{Parisi98aconjecture} conjectured the following explicit expression for every fixed $n$
 \[
    \E{\min\limits_{\pi\in \SS_n} S(\pi)} =\sum\limits_{k=1}^{n} \frac{1}{k^{2}}.
\]
and confirmed it for $n=1,2$ and for $n\to\infty$. Dotsenko \cite{Dotsenko_2000}  
also investigated the precise solution in the exponential case.

In \cite{SorkinMN,BuckChan,CoppersmithSorkin} similar problems were investigated for rectangular matrices.

Aldous \cite{AldousZet2}  gave a rigorous proof of  M\'ezard--Parisi conjecture \eqref{mezpar}.
His approach is based on the assignment analysis of a graph with edges provided with exponentially distributed
weights, see \cite{AldousAsymp}.
\medskip

We are interested here in the case when the distribution $\mathcal{P}$ is the standard normal, i.e. 
$\mathcal{P} = \NN\left(0, 1\right)$. By convenience reasons, for such symmetric distributions it is more natural to study the \textit{maximum}  of random assignment.  
 
 We stress that the support of Gaussian distribution is unbounded which essentially changes the results. Now the expectation of maximum does not tend to a finite limit, as $n\to\infty$,  but increases to infinity, although quite slowly. 

Our main result is the following theorem:

\begin{thm}\label{t:upperlower_bounds}
Let $\{S(\pi),\pi\in\SS_n\}$ be a Gaussian process given by
\be \label{Gass}
    S(\pi) = \sum_{i=1}^{n}  X_{i\pi(i)}, \qquad \pi\in \SS_n,
\ee
where $X_{ij}\,\,(1\leq i,j \leq n)$ are i.i.d. standard Gaussian random variables.
Then it is true that
\be \label{main_opt}
    \lim\limits_{n\to\infty} \frac{\E{\max\limits_{\pi\in \SS_n}S(\pi)}}{n\sqrt{2\log{n}}} = 1.
\ee
\end{thm}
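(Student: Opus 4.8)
\medskip
\noindent\textbf{Proof plan.} The plan is to establish the two matching bounds $\limsup\le1$ and $\liminf\ge1$ for the normalized quantity in \eqref{main_opt} separately. For the upper bound: for each fixed $\pi$ the variables $X_{i\pi(i)}$, $i=1,\dots,n$, sit in distinct cells of the matrix, hence are independent and $S(\pi)\sim\NN(0,n)$; thus $\{S(\pi):\pi\in\SS_n\}$ is a centered Gaussian family of cardinality $|\SS_n|=n!$ whose members each have variance $n$. I would apply the classical Gaussian maximal inequality $\E\max_{1\le k\le N}\xi_k\le\sigma\sqrt{2\log N}$, valid for any centered jointly Gaussian $\xi_1,\dots,\xi_N$ with $\max_k\Var\xi_k\le\sigma^2$ (which follows from $e^{\lambda\E\max_k\xi_k}\le\sum_k\E e^{\lambda\xi_k}\le Ne^{\lambda^2\sigma^2/2}$ by Jensen, optimized in $\lambda$). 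This gives $\E\max_{\pi}S(\pi)\le\sqrt n\,\sqrt{2\log n!}$, and since $\log n!=n\log n+O(n)$ by Stirling's formula, the right-hand side is $n\sqrt{2\log n}\,(1+o(1))$; hence $\limsup_{n}\E\max_\pi S(\pi)/(n\sqrt{2\log n})\le1$.

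For the lower bound I would analyze the greedy row-by-row matching, which is the explicit strategy promised in the abstract: scanning $i=1,2,\dots,n$, once $\pi_{\mathrm{gr}}(1),\dots,\pi_{\mathrm{gr}}(i-1)$ are chosen, set $A_i=\{1,\dots,n\}\setminus\{\pi_{\mathrm{gr}}(1),\dots,\pi_{\mathrm{gr}}(i-1)\}$ and $\pi_{\mathrm{gr}}(i)=\arg\max_{j\in A_i}X_{ij}$ (ties broken arbitrarily). This defines an element of $\SS_n$ with $S(\pi_{\mathrm{gr}})=\sum_{i=1}^n\max_{j\in A_i}X_{ij}$. The crucial structural point is that $A_i$ is a function of the rows $1,\dots,i-1$ alone, hence independent of the $i$-th row $(X_{ij})_{1\le j\le n}$; consequently, conditionally on $A_i$, the summand $\max_{j\in A_i}X_{ij}$ has the law of the maximum $M_{n-i+1}$ of $n-i+1$ i.i.d.\ standard Gaussians, so $\E[\max_{j\in A_i}X_{ij}\mid A_i]=\E M_{n-i+1}$. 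Taking expectations and summing,
\be\label{plan:sum}
\E\max_{\pi\in\SS_n}S(\pi)\ \ge\ \E S(\pi_{\mathrm{gr}})\ =\ \sum_{i=1}^n\E M_{n-i+1}\ =\ \sum_{m=1}^n\E M_m .
\ee

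It then remains to show $\sum_{m=1}^n\E M_m\sim n\sqrt{2\log n}$, for which I would use the standard sharp asymptotics $\E M_m=(1+o(1))\sqrt{2\log m}$. The bound $\E M_m\le\sqrt{2\log m}$ is the single-variable instance of the maximal inequality above; for the matching lower bound, write $\E M_m\ge(1-\eps)\sqrt{2\log m}\,\P(M_m>(1-\eps)\sqrt{2\log m})-\E[(M_m)^-]$, note that $(M_m)^-\ne0$ forces all $m$ entries of the row to be negative so that $\E[(M_m)^-]\le\frac1{\sqrt{2\pi}}2^{-(m-1)}$ is exponentially small, and deduce $\P(M_m>(1-\eps)\sqrt{2\log m})\to1$ from $\P(M_m\le t)=\Phi(t)^m\le e^{-m\bar\Phi(t)}$ together with $\bar\Phi(t)\sim e^{-t^2/2}/(t\sqrt{2\pi})$. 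Then, given $\eps>0$, choose $m_0$ with $\E M_m\ge(1-\eps)\sqrt{2\log m}$ for all $m\ge m_0$; since $\sum_{m=1}^{m_0-1}\E M_m=O(1)$ and $\sum_{m=1}^n\sqrt{2\log m}\sim\int_1^n\sqrt{2\log x}\,dx\sim n\sqrt{2\log n}$ (the integrand being slowly varying), \eqref{plan:sum} yields $\liminf_n\E S(\pi_{\mathrm{gr}})/(n\sqrt{2\log n})\ge1-\eps$; letting $\eps\downarrow0$ and combining with the upper bound gives \eqref{main_opt}.

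The upper bound is essentially automatic. The substance is in the lower bound: one must single out a concrete, analyzable permutation and exploit the conditional-independence structure that collapses $\E S(\pi_{\mathrm{gr}})$ into the exact sum $\sum_{m=1}^n\E M_m$, and then perform the summation carefully --- the correct constant $\sqrt2$ is produced by the terms with $m$ comparable to $n$, so a crude estimate (e.g.\ replacing every $\E M_m$ by its value at $m=\lfloor\sqrt n\rfloor$) would only give the weaker constant $1/\sqrt2$.
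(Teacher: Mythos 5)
Your proposal is correct and follows essentially the same route as the paper: the same Gaussian maximal inequality with $N=n!$, $\sigma^2=n$ for the upper bound, and the same greedy row-by-row permutation for the lower bound, exploiting that the available column set depends only on earlier rows so each summand is a maximum of $n-i+1$ i.i.d.\ standard normals, with the negative part dismissed as exponentially small and the lower tail controlled via $\Phi(t)^m\le e^{-m\bar\Phi(t)}$ and the Mills ratio. The only differences are cosmetic (threshold $(1-\eps)\sqrt{2\log m}$ versus the paper's $\sqrt{2\log m}-1$, and a direct $t\,\P(M_m>t)$ bound in place of the paper's integral $\int_0^r(1-\Phi^m(t))\,dt$).
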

\medskip

In the following we call $\{S(\pi),\pi\in\SS_n\}$ defined in \eqref{Gass} {\it a Gaussian assignment process}.
It seems to be quite an interesting object worth of detailed studies in its own right. Note that $S$ is stationary with respect to the group structure of $\SS_n$. 
\bigskip

We also show that this asymptotic behavior  of the maximum is attained at an explicitly constructed
 {\textit{greedy}} random permutation $\pi^*$ that turns out to be asymptotically optimal 
 in terms of expectation, i.e.
\be \label{greedy_opt}
    \lim\limits_{n\to\infty}\frac{\E{S(\pi^*)}}{n\sqrt{2 \log{n}}} = 1.
\ee
In addition to the expectation study, we provide a central limit theorem for  $S(\pi^*)$,
namely, the following is true.

\begin{thm}\label{t:clt} We have
\[
   \frac{S(\pi^*)-A_n}{B_n} \xrightarrow[n \to \infty]{d} \NN(0, 1),
\]
where
\begin{eqnarray*}
   A_n &=& n\sqrt{2 \log{n}} + O\left(\frac{n \log{\log{n}}}{\sqrt{\log{n}}}\right),
\\
    B_n^2 &=& \frac{\pi^2}{12} \frac{n}{\log{n}} + o\left(\frac{n}{\log{n}}\right).
\end{eqnarray*}
Moreover,
\[
   \sup_{r\in\R}  \left| \P\left\{ \frac{S(\pi^*)-A_n}{B_n}  \le r\right\}-\Phi(r) \right| = O\left(\frac{1}{\sqrt{n}}\right),
\]
where $\Phi(\cdot)$ denotes the standard normal distribution function.
\end{thm}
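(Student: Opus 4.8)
The plan is to reduce the statement to a Berry--Esseen-type central limit theorem for a sum of independent summands. The first ingredient is the structural description of the greedy permutation already exploited to establish \eqref{greedy_opt}: if one scans the rows one by one and assigns to each row its largest still-unused column, the value collected at the step where $m$ columns remain available is, conditionally on all earlier rows, the maximum of $m$ fresh i.i.d.\ standard Gaussians --- a distribution not depending on the conditioning. Consequently the successive contributions are \emph{independent}, and
\[
   S(\pi^*)\;\stackrel{d}{=}\;\sum_{m=1}^{n}Y_m,
\]
where $Y_m$ denotes the maximum of $m$ i.i.d.\ standard Gaussians and $Y_1,\dots,Y_n$ are independent. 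Thus the theorem becomes a Berry--Esseen-type CLT for a sum of independent, non-identically distributed summands, with $A_n=\E S(\pi^*)=\sum_{m=1}^n\E Y_m$ and $B_n^2=\Var S(\pi^*)=\sum_{m=1}^n\Var Y_m$.

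Next I would collect the one-variable asymptotics. Classical extreme-value theory gives $(Y_m-b_m)/a_m\to\mathrm{Gumbel}$ with $a_m=1/\sqrt{2\log m}$ and $b_m=\sqrt{2\log m}-\frac{\log\log m+\log 4\pi}{2\sqrt{2\log m}}$; since the standard Gumbel law has finite moments of all orders (mean $\gamma$, variance $\pi^2/6$) this points to $\E Y_m=\sqrt{2\log m}+O\big(\tfrac{\log\log m}{\sqrt{\log m}}\big)$, $\Var Y_m=\tfrac{\pi^2}{12\log m}(1+o(1))$, and $\E|Y_m-\E Y_m|^3=O\big((\log m)^{-3/2}\big)$. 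To make these usable inside sums I would back them up with uniform-in-$m$ tail bounds: a union bound yields $\P(Y_m>\sqrt{2\log m}+s)\le e^{-s\sqrt{2\log m}}$ for $s\ge0$, $m\ge2$, and independence of the coordinates gives an even lighter lower-tail estimate; together these imply that $(Y_m-b_m)/a_m$ is bounded in every $L^p$ uniformly in $m$, hence the convergences of the mean, variance and third absolute moment above hold with the indicated rates. Summing over $m$ --- comparing $\sum_m\sqrt{2\log m}$ with $\int_1^n\sqrt{2\log x}\,dx=n\sqrt{2\log n}+O(n/\sqrt{\log n})$, and using $\sum_{m=2}^n\frac1{\log m}\sim n/\log n$ --- and absorbing the $O(1)$ contribution of the finitely many small $m$ (where the Gumbel regime is not yet accurate) then yields exactly the stated forms of $A_n$ and $B_n^2$.

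Finally I would apply the Berry--Esseen inequality for sums of independent random variables to $Z_m:=Y_m-\E Y_m$, obtaining
\[
   \sup_{r\in\R}\Big|\P\Big\{\tfrac{S(\pi^*)-A_n}{B_n}\le r\Big\}-\Phi(r)\Big|
   \;\le\;C\,\frac{\sum_{m=1}^n\E|Z_m|^3}{B_n^{3}}
   \;=\;O\!\left(\frac{n(\log n)^{-3/2}}{(n/\log n)^{3/2}}\right)=O\!\left(\frac1{\sqrt n}\right),
\]
which also delivers the distributional convergence to $\NN(0,1)$. The crux --- and the only genuinely delicate step --- is the middle one: upgrading the classical distributional convergence of $Y_m$ to a uniform two-sided moment control over the whole range $1\le m\le n$, in particular the lower bound $\Var Y_m\gtrsim 1/\log m$, since it is precisely this that forces both numerator and denominator of the Berry--Esseen ratio to scale as the right powers of $n/\log n$. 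The rest is elementary summation and a black-box Berry--Esseen bound.
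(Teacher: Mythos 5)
Your proposal is correct and follows essentially the same route as the paper: independence of the greedy summands, identification of the $i$-th term as the maximum of $m=n-i+1$ i.i.d.\ standard Gaussians, Gumbel asymptotics upgraded to convergence of the first three moments via uniform integrability, elementary summation of the resulting expansions for $A_n$ and $B_n^2$, and the Berry--Esseen bound with ratio $\sum_m\E|Z_m|^3/B_n^3=O(n^{-1/2})$. Your explicit two-sided tail bounds making the uniform moment control precise are in fact slightly more detailed than the paper's appeal to an ``elementary calculation.''
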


The structure of the work is as follows.
In Section \ref{s:upper} we provide an upper bound
for maximum's expectation.
In Section \ref{s:greedy} we describe the greedy strategy (Section \ref{ss:greedy_def}) and provide the lower bound for its outcome thus proving its optimality (Section \ref{ss:greedy_lower}). Finally, 
the corresponding central limit theorem  (Theorem \ref{t:clt}) is proved in Section \ref{ss:proof_clt}.

\section{An upper bound}
\label{s:upper}
We use the following standard estimate for the maximum of Gaussian random variables, see \cite[p.180]{GRF}.

\begin{lem}
Let $\{X_j\}_{j=1}^N$ be a family of centered Gaussian random variables such that $\max\limits_{1\le j\le N} \E X_j^2\le \sigma^2$. Then
\be \label{upper}
    \E \max\limits_{1\le j\le N} X_j \le \sqrt{2\log N}\ \sigma.
\ee
\end{lem}
 
 We stress that no assumptions on the dependence are required in this statement.
 
Since $\E S(\pi)^2=n$ for all $\pi\in \SS_n$ and $|\SS_n|=n!$, we obtain from \eqref{upper} the necessary upper bound
\[
  \E \max\limits_{\pi\in \SS_n} S(\pi) \le \sqrt{2\log (n!)\ n}\
  = n \sqrt{2\log n} + O(n),  \qquad \textrm{as } n\to \infty.
\]

\section{The greedy strategy and its properties}
\label{s:greedy}
\subsection{Definition}
\label{ss:greedy_def}
Consider the following \textit{greedy} strategy for constructing a random permutation $\pi^*$
providing an asymptotically optimal (in average) value of the assignment process. Let 
$[i] := \{1, 2, \dots, i\}$.
Define
\[
    \pi^*(1) := \arg \max\limits_{j\in[n]} X_{1j},
\]
and let for all  $i=2,\dots, n$
\[
    \pi^*(i) := \arg \max\limits_{j \not\in \pi^*([i-1])} X_{ij}.
\]

It is natural to call this strategy greedy, because on every step we consider the line $i$, take 
the maximum of its available elements (without considering the influence of this choice on
subsequent steps) and then forget the line $i$ and the corresponding column  $\pi^*(i)$.

Due to the simple structure, the summands in the representation
\be \label{Gass_greedy}
    S(\pi^*) = \sum_{i=1}^{n}  X_{i\pi^*(i)}
\ee
are independent.

\subsection{A lower bound}
\label{ss:greedy_lower}
Our main goal in this subsection is summarized in the following statement.

\begin{prop}\label{p:greedy}
It is true that
\[
 \E{S(\pi^*)} \geq n\sqrt{2 \log{n}} 
 \, (1+ o(1)), 
 \qquad \textrm{as } n\to \infty.
\]
\end{prop}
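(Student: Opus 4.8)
The plan is to exploit the independence built into the greedy construction to rewrite $\E S(\pi^*)$ as a sum of expected maxima of i.i.d.\ standard Gaussians, and then to bound that sum from below.

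\emph{Step 1 (an exact formula for $\E S(\pi^*)$).} Denote by $e_m:=\E\max_{1\le j\le m}Z_j$ the expected maximum of $m$ i.i.d.\ $\NN(0,1)$ variables. I would show that $\E X_{i\pi^*(i)}=e_{n-i+1}$ for every $i$. Indeed, conditionally on the first $i-1$ rows of the matrix $(X_{ij})$ — equivalently, on the set $A:=\pi^*([i-1])$ of already occupied columns, which is measurable with respect to those rows — the $n-i+1$ entries $(X_{ij})_{j\notin A}$ are i.i.d.\ standard Gaussian, so $\E[X_{i\pi^*(i)}\mid \text{rows }1,\dots,i-1]=e_{n-i+1}$. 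Taking expectations and summing over $i$ yields $\E S(\pi^*)=\sum_{m=1}^{n}e_m$. (The same conditioning re-establishes the independence of the summands noted in Section~\ref{ss:greedy_def}.)

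\emph{Step 2 (a sharp first-order lower bound on $e_m$).} Fix $\delta\in(0,1)$ and put $a_m:=(1-\delta)\sqrt{2\log m}$. The elementary tail estimate $1-\Phi(t)\ge c\,t^{-1}e^{-t^2/2}$ for $t\ge1$ gives $m\bigl(1-\Phi(a_m)\bigr)\to\infty$, hence
\[
   \P\Bigl(\max_{1\le j\le m}Z_j\ge a_m\Bigr)=1-\Phi(a_m)^m\ \longrightarrow\ 1 .
\]
Splitting $e_m$ according to whether $\max_j Z_j\ge a_m$, bounding the first part below by $a_m\,\P(\max_j Z_j\ge a_m)$ and the second by $-\E(Z_1)^+$ (legitimate since $\max_j Z_j\ge Z_1$), I get $e_m\ge(1-\delta)\sqrt{2\log m}\,(1-o(1))$ as $m\to\infty$; moreover $e_m\ge0$ for all $m$. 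So there is an $m_\delta$ with $e_m\ge(1-\delta)^2\sqrt{2\log m}$ for all $m\ge m_\delta$.

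\emph{Step 3 (summation).} Since every $e_m\ge0$, for $n$ so large that $n^{1-\delta}\ge m_\delta$ we may discard the small-$m$ terms and use $\log m\ge(1-\delta)\log n$ on the remaining range:
\[
   \E S(\pi^*)\ \ge\ \sum_{n^{1-\delta}\le m\le n}e_m\ \ge\ \bigl(n-n^{1-\delta}\bigr)\,(1-\delta)^2\sqrt{2(1-\delta)\log n}.
\]
Dividing by $n\sqrt{2\log n}$ and letting $n\to\infty$ gives $\liminf_n \E S(\pi^*)/\bigl(n\sqrt{2\log n}\bigr)\ge(1-\delta)^{5/2}$; as $\delta\in(0,1)$ is arbitrary, the left-hand side is $\ge1$, which is the assertion of the proposition.

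I expect the only genuinely delicate point to be Step 2: one must pull out the \emph{sharp} leading constant, i.e.\ that $e_m\sim\sqrt{2\log m}$ and not merely $e_m$ of order $\sqrt{\log m}$, since a cruder bound would not match the upper bound $n\sqrt{2\log n}+O(n)$ of Section~\ref{s:upper}. The truncation at $m=n^{1-\delta}$ in Step 3 is merely a device for avoiding any uniformity requirement on the $e_m$-estimate; alternatively one could invoke the full expansion $e_m=\sqrt{2\log m}+O\!\bigl(\log\log m/\sqrt{\log m}\bigr)$ together with $\sum_{m\le n}\sqrt{2\log m}=n\sqrt{2\log n}(1+o(1))$ (compare the sum with $\int_0^n\sqrt{2\log x}\,dx$), which would in fact be needed anyway for the refined constant $A_n$ in Theorem~\ref{t:clt}.
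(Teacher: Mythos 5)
Your proposal is correct and follows essentially the same route as the paper: both reduce $\E X_{i\pi^*(i)}$ to the expected maximum of $m=n-i+1$ i.i.d.\ standard normals (the paper by noting the conditional i.i.d.\ structure of $\{X_{ij},\ j\in\beta_{n,i}\}$, you by explicit conditioning on the first $i-1$ rows), both obtain the sharp lower bound $\sqrt{2\log m}\,(1+o(1))$ from the Gaussian tail lower bound via $r\,\P(\max\ge r)$ at a level $r$ slightly below $\sqrt{2\log m}$, and both then sum over $m$. The only differences are bookkeeping choices (your $\delta$-truncation versus the paper's $r=\sqrt{2\log m}-1$ and its splitting on $\{\max\ge 0\}$), which do not change the substance of the argument.
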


\begin{proof}

For every fixed $i$ introduce the index set
\[ 
   \beta_{n,i}:=[n]\backslash \pi^*([i-1]).
\]
and denote $m = m(n,i):= |\beta_{n,i}| = n-i+1$. Notice that $m$ random variables
\[
    \{X_{ij},\ j\in \beta_{n,i}\}
\]
are still i.i.d. standard normal although the index set $\beta_{n,i}$ is itself random. 

Denote $\AA:= \{\max\limits_{j \in \beta_{n,i}} X_{ij} \geq 0\}$ 
and let  $\AA^\compl$ be its complement.

The expectation $\E{X_{i\pi^*(i)}}$ can be split into two parts:
\[
    \E{X_{i\pi^*(i)}} = \E{\max\limits_{j \in \beta_{n,i}} X_{ij}} 
    = \E{\max\limits_{j \in \beta_{n,i} } X_{ij} \ed{\AA} } 
      + \E{\max\limits_{j \in \beta_{n,i}} X_{ij} \ed{\AA^\compl}}.
\]

The second term is small because for every $k \in \beta_{n,i}$

\begin{eqnarray*}
    \Big|\E{\max\limits_{j \in\beta_{n,i} } X_{ij} \ed{\AA^\compl}}\Big| 
  &\le& 
  \E \left(|X_{ik}| \ed{X_{ij}\le 0, j \in \beta_{n,i},j\not=k}\right)
\\
    &=&  \left(\frac{1}{2}\right)^{m-1} \cdot\E{\left|X_{11}\right|}  = o(1).
\end{eqnarray*}

Consider the first term
\[
    \E{\max\limits_{j \in \beta_{n,i} } X_{ij} \ed{\AA}} 
    =  \int\limits_{0}^{\infty} \P\left\{\max\limits_{j \in \beta_{n,i} } X_{ij} \ed{\AA} > t\right\} dt.
\]

It is obvious that
\[
    \P\left\{\max\limits_{j \in \beta_{n,i} } X_{ij} \ed{\AA} \leq t\right\}
    \leq 
    \P\left\{\max\limits_{j \in \beta_{n,i} } X_{ij} \leq t\right\}
    =
    \Phi^m(t).
\]

Therefore, for every $r>0$ it is true that
\[
    \E{\max\limits_{j \in \beta_{n,i} } X_{ij} \ed{\AA}} 
    \geq  \int\limits_{0}^{\infty} \left(1 - \Phi^m(t)\right) dt 
    \geq \int\limits_{0}^{r} \left(1 - \Phi^m(t)\right) dt 
    \geq r \left(1 - \Phi^m(r)\right).
\]

We have the following upper bound for $\Phi^m(r)$:
\[
    \Phi^m(r) = (1 - \Hat{\Phi}(r))^m  \leq \exp{\left(-m \Hat{\Phi}(r)\right)},
\]
where $\Hat{\Phi}$ is the tail of the standard normal law. By using inequality
\[
    \Hat{\Phi}(r) \geq \frac{1}{\sqrt{2\pi}} \left(\frac{1}{r} - \frac{1}{r^3}\right) e^{-r^2 / 2}
\]
for $r := r(m) = \sqrt{2 \log{m}} - 1$ we have
\[
    \exp{(-m\Hat{\Phi}(r))} = o(1),
\]
hence,
\[
    \E{\max\limits_{j \in \beta_{n,i} } X_{ij} \ed{\AA}} \geq \sqrt{2 \log{m}}\,\,(1 + o(1)),
\]
and we arrive at
\[
    \E{X_{i\pi^*(i)}} \geq \sqrt{2 \log{m}} \,\, \left(1 + o\left(1\right)\right) + o(1).
\]

Therefore we have a lower bound
\begin{eqnarray*}
\E{S(\pi^*)} &=& \sum\limits_{i=1}^{n} \E{X_{i\pi^*(i)}}
\\
    &\geq& \sum\limits_{i=1}^{n} \sqrt{2 \log{m(n, i)}}\,
           (1 + o(1))
\\
    &=& \sum\limits_{m=1}^{n} \sqrt{2 \log{m}}\, (1+o(1))
\\
    &=& n\, \sqrt{2 \log{n}}\, (1+o(1)).
\end{eqnarray*}

\end{proof}

 Taken together, the upper bound \eqref{upper} and Proposition \ref{p:greedy}  yield the chain of bilateral estimates:
\[
    n\sqrt{2\log{n}}\, (1+o(1)) 
    \leq \E{S(\pi^*)} 
    \leq  \E \max\limits_{\pi\in\SS_n} S(\pi) 
    \leq  n\sqrt{2\log{n}}\, (1+o(1)), 
    \text{ as } n\to\infty,
\]
which proves both \eqref{main_opt} and \eqref{greedy_opt}. 

We conclude that the greedy strategy is asymptotically optimal for maximization of expectation of the Gaussian assignment process.

\subsection{Central Limit Theorem}
\label{ss:proof_clt}

\begin{proof}[ of Theorem \ref{t:clt}]

For proving a central limit theorem for sums of independent variables, it is sufficient
to prove that the corresponding \textit{Lyapunov fraction} 
tends to zero.
Recall that each term $X_{i\pi^*(i)}$ of the sum
in the representation \eqref{Gass_greedy} is  a maximum of 
$m = m(n, i) := n - i + 1$ 
independent standard normal random variables. It is well known that properly centered and scaled Gaussian maxima converge weakly to Gumbel distribution, namely,
\[
    \frac{X_{i\pi^*(i)} - a_m}{b_m} \xrightarrow[m \to \infty]{d} G,
\]
where $G$ has the distribution function $F_G(x) = \exp(-e^{-x})$,
and
\begin{eqnarray*}
    a_m &:=& \sqrt{2\log{m}} - \frac{\log{\log{m}}+\log{4\pi}}{2\sqrt{2\log{m}}}, \\
    b_m &:=& (2\log{m})^{-\frac12}.
\end{eqnarray*}
see, e.g. \cite[Sect.\,2.3.2]{Galambosh}.

It is known (see. \cite{Gumbel}) that all moments of  $G$ are finite. In particular,  $\E{G} = \gamma, \Var{G} = \zeta(2)$, where $\gamma$ is Euler constant and $\zeta(\cdot)$ is Riemann's zeta function.

Furthermore, an elementary calculation shows that the family of random variables 
$\left\{\Big|\frac{X_{i\pi^*(i)} - a_m}{b_m}\Big|^3\right\}_{n,i}$
is uniformly integrable.
Therefore, the weak convergence implies convergence of moments,
see \cite[Sect.\,1.5]{Billingsley}.
\begin{eqnarray*}
 \frac{\E X_{i\pi^*(i)}-a_m}{b_m} 
    &\xrightarrow[m \to \infty]{}& 
    \E{G};
\\
 \E{\left(\frac{X_{i\pi^*(i)}-a_m}{b_m}\right)^2} 
    &\xrightarrow[m \to \infty]{}& 
    \E{G^2};
\\
    \E{\left|\frac{X_{i\pi^*(i)}-a_m}{b_m}\right|^3} 
    &\xrightarrow[m \to \infty]{}& 
    \E{|G|^3}.
\end{eqnarray*}

We obtain asymptotic expressions for expectations and variances of
$X_{i\pi^*(i)}$, namely:
\begin{eqnarray} \label{formula:expect}
   \E{X_{i\pi^*(i)}} &=& a_m + b_m \E{G} (1+o(1))= a_m + b_m \gamma(1+o(1)),
\\ \nonumber
   \Var {X_{i\pi^*(i)}} &=&  b_m^2 \left[ \E{\left(\frac{X_{i\pi^*(i)}-a_m}{b_m}\right)^2} 
   - \left(\frac{\E X_{i\pi^*(i)}-a_m}{b_m}\right)^2 \right]
\\   \nonumber
     &=& b_m^2 \left[ \E G^2 - (\E G)^2 \right] (1+o(1)) 
\\  \nonumber
     &=&  b_m^2 \Var G \  (1+o(1)) 
      = \left(2 \log{m}\right)^{-1}  \zeta(2) \, (1+o(1)).
\end{eqnarray}

By using \eqref{formula:expect} we also obtain the bound
\[
      \left|a_m - \E{X_{i\pi^*(i)}}\right| = O\left(b_m\right).
\]

Finally, let us evaluate the third absolute central moment of $X_{i\pi^*(i)}$:
\begin{eqnarray*}
    \E{\left|X_{i\pi^*(i)} - \E{X_{i\pi^*(i)}}\right|^3}
    &=&
    \E{ \left|X_{i\pi^*(i)} - a_m + a_m - \E{X_{i\pi^*(i)}}\right|^3}
\\
   &\leq& 4 \, \E{\left|X_{i\pi^*(i)} - a_m \right|^3}
      + 4 \left|a_m - \E{X_{i\pi^*(i)}}\right|^3
\\
    &=& O(b_m^3)  = O \left(  \left( \log{m}\right)^{-3/2} \right).
\end{eqnarray*}

It follows that
\begin{eqnarray*}
    \sum\limits_{i=1}^{n} \E{\left|X_{i\pi^*(i)} - \E{X_{i\pi^*(i)}}\right|^3}
    &\leq&  \sum\limits_{i=1}^{n} O\left(   \left( \log{m(n,i)}\right)^{-3/2}  \right)
\\
    &=&  \sum\limits_{m=1}^{n} O\left( \left( \log{m}\right)^{-3/2}  \right)
\\
    &=&   O\left( n  \left( \log{n}\right)^{-3/2}  \right).
\end{eqnarray*}

Consider now the variance of $S(\pi^*)$
\[
  B_n^2 := \Var S(\pi^*) =\sum_{i=1}^n \Var X_{i\pi^*(i)}.
\]
We have
\begin{eqnarray*}
    B_n^2   &=&  
    \sum\limits_{m=1}^{n} \zeta(2) \left(2 \log{m}\right)^{-1} (1+o(1))
\\
    &=& \frac{\zeta(2)}{2}
       \sum\limits_{m=1}^{n} \frac{ 1+o(1)}{\log{m}}
     =  \frac{\pi^2}{12}  \  \frac{ n( 1+o(1))}{\log{n}}.
\end{eqnarray*}

Therefore, for Lyapunov fraction we have a bound 
\[
  L_n:= B_n^{-3/2} \sum\limits_{i=1}^{n} \E{\left|X_{i\pi^*(i)} - \E{X_{i\pi^*(i)}}\right|^3} = O(n^{-1/2}), \quad \textrm{as } n\to \infty.
\]
Now Lyapunov's Central Limit Theorem yields
\[
   \frac{S(\pi^*)-A_n}{B_n} \xrightarrow[n \to \infty]{d} \NN(0,1).
\]
with $A_n:=\E S(\pi^*)$ and $B_n^2=\Var S(\pi^*)$ as defined above.
Moreover, we have Berry--Esseen bound for the convergence rate: there exists a numerical constant $C>0$ such that for all $n$ it is true that

\begin{eqnarray*}
    \sup_{r\in\R}  \left| \P\left\{ \frac{S(\pi^*)-A_n}{B_n}  \leq r\right\} -\Phi(r) \right|
    &\leq&   C L_n = O(n^{-1/2}).
\end{eqnarray*}

Finally, we have the following asymptotic expression for $A_n$.
\begin{eqnarray*}
    A_n &=& \sum_{i=1}^{n} \E{X_{i\pi^*(i)}} = \sum_{m=1}^{n} (a_m + b_m \gamma(1+o(1)))
\\
    &=&     \sum_{m=1}^{n} \left[ \sqrt{2\log{m}} + O\left(\frac{\log{\log{m}}}{\sqrt{\log{m}}}\right)
    + \frac{\gamma(1+o(1))}{ (2 \log{m})^{1/2}}  \right]
\\
    &=&     n\sqrt{2 \log{n}} + O\left(\frac{n \log{\log{n}}}{\sqrt{\log{n}}}\right).
\end{eqnarray*}
 
 \end{proof}
 
 \section*{Acknowledgement} 
 The work supported by Russian Science Foundation Grant 21-11-00047.
 

\end{document}